\numberwithin{equation}{section}
\font\script=rsfs10 at 11pt
\def\H{{\mbox{\script H}\,\,}}
\def\fn{$\partial^+ B^+_\delta$}
\def\fd{$\partial^- B^-$}
\def\C{\mathcal C}
\def\N{\mathbb N}
\def\R{\mathbb R}
\def\S{\mathbb S}
\def\J{\mathfrak J}
\def\eps{\varepsilon}
\def\step#1#2{\par\noindent{\underline{\it Step~#1.}}\emph{ #2}\\}
\def\proofof#1{\begin{proof}[Proof of~#1]}
\def\freccia#1{\xrightarrow[\ #1]{}}
\def\XXint#1#2#3{{\setbox0=\hbox{$#1{#2#3}{\int}$} \vcenter{\vspace{-1pt}\hbox{$#2#3$}}\kern-.5\wd0}}
\def\Xint#1{\mathchoice {\XXint\displaystyle\textstyle{#1}}{\XXint\textstyle\scriptstyle{#1}}{\XXint\scriptstyle\scriptscriptstyle{#1}}{\XXint\scriptscriptstyle\scriptscriptstyle{#1}}\!\int}
\def\intmed{\Xint{-}}
\newtheorem{theorem}{Theorem}[section]
\newtheorem{lemma}[theorem]{Lemma}
\newtheorem{definition}[theorem]{Definition}
\newtheorem{prop}[theorem]{Proposition}
\newtheorem{remark}[theorem]{Remark}
\begin{document}

\title[Existence of isoperimetric sets with densities ``converging from below'' on $\R^N$]{Existence of isoperimetric sets with densities ``converging from below'' on $\R^N$}

\author{Guido De Philippis}\address{Institut f\"ur Mathematik Universit\"at Z\"urich, Winterthurerstr. 190, CH-8057 Z\"urich (Switzerland)}\email{guido.dephilippis@math.uzh.ch}

\author{Giovanni Franzina}\address{Department Mathematik, University of Erlangen, Cauerstr. 11, 91058 Erlangen (Germany)}\email{franzina@math.fau.de}
\author{Aldo Pratelli}\address{Department Mathematik, University of Erlangen, Cauerstr. 11, 91058 Erlangen (Germany)}\email{pratelli@math.fau.de}

\begin{abstract}
In this paper, we consider the isoperimetric problem in the space $\R^N$ with density. Our result states that, if the density $f$ is l.s.c. and converges to a limit $a>0$ at infinity, being $f\leq a$ far from the origin, then isoperimetric sets exist for all volumes. Several known results or counterexamples show that the present result is essentially sharp. The special case of our result for radial and increasing densities posively answers a conjecture made in~\cite{PM13}.
\end{abstract}

\maketitle

\section{Introduction}
In this paper we are interested in the isoperimetric problem with a weight. This means that we are given a positive l.s.c. function $f:\R^N\to \R^+$, usually called ``density'', and we measure volume and perimeter of a generic subset $E$ of
$\R^N$ as
\begin{align*}
|E|_f:= \H_f^N(E)= \int_E f(x) \, d \H^N\,, &&
P_f(E) : = \H_f^{N-1}(\partial^M E)=\int_{\partial^M E} f(x)\, d\H^{N-1}(x)\,,
\end{align*}
where the \emph{essential boundary} of $E$ (which coincides with the usual topological boundary as soon as $E$ is regular) is defined as
\[
\partial^M E  = \bigg\{ x\in \R: \liminf_{r\searrow0} \frac{\H^N(E\cap B_r(x))}{\omega_Nr^N}<1  \ \hbox{and} \ 
\limsup_{r\searrow 0}	\frac{\H^N(E\cap B_r(x))}{\omega_Nr^N}>0 \bigg\}\,,
\]
$B_r(x)$ stands for the ball of radius $r$ centered at $x$, and $\omega_N$ is the euclidean volume of a ball of radius 1. This problem, and many specific cases, have been extensively studied in the last decades and have many important applications; a short (highly non complete) list of some related papers is~\cite{Alm,BigReg,M1,MJ,newM1,BCMR,CMV,CRS,D,PM13,CP12}.\par\medskip

The first interesting question in this setting is of course the existence of \emph{isoperimetric sets}, that are sets $E$ with the property that $P_f(E)= \J(|E|_f)$ where, for any $V\geq 0$,
\begin{equation*}
\J(V):=\inf\big\{P_f(F):\, |F|_f=V\big\}\,.
\end{equation*}
Depending on the assumptions on $f$, the answer to this question may be trivial or extremely complicate.

Let us start with a very simple, yet fundamental, observation. Fix a volume $V>0$ and let $\{E_i\}$ be an \emph{isoperimetric sequence} of volume $V$: this means that $|E_i|_f=V$ for every $i\in\N$, and $P_f(E_i) \to \J(V)$. Thus, possibly up to a subsequence, the sets $E_i$ converge to some set $E$ in the $L^1_{\rm loc}$ sense. As a consequence, standard lower semi-continuity results in BV ensure that $P_f(E)\leq \liminf P_f(E_i)=\J(V)$ (at least, for instance, if $f>0$\dots); therefore, if actually $|E|_f=V$, then obviously $E$ is an isoperimetric set. Unfortunately, this simple observation is not sufficient, in general, to show the existence of isoperimetric sets, because there is no general reason why the volume of $E$ should be exactly $V$ (while it is obviously at most $V$).\par

A second remark to be done is the following: if the volume of the whole space $\R^N$ is finite, then in the argument above it becomes obvious that $|E|_f=V$; basically, the mass cannot vanish to infinity. Hence, in this case isoperimetric sets exist for all volumes.\par

Let us then consider the more general (and interesting) problem when $f\not\in L^1(\R^N)$. In this case, by the different scaling properties of volume and perimeter, roughly speaking we can say that ``isoperimetric sets like small density''. Let us be a little bit more precise: one can immediately check that, if two different balls $B_1$ and $B_2$ lie in two regions where the density is constantly $d_1$ resp. $d_2$, and if $|B_1|_f=|B_2|_f$, then $P_f(B_1)< P_f(B_2)$ as soon as $d_1<d_2$. More in general, all the simplest examples show that isoperimetric sets tend to privilege the zones where the density is lower, and it is very reasonable to expect that this behaviour is quite general. Of course, this argument does not predict anything in situations where the density varies quickly (for instance, it would be very convenient for a set to lie where the density is large if at the same time the boundary stays where the density is small!), but nevertheless having this ``general rule'' in mind may help a lot.\par

With the aid of the above observation, let us now come back again to the question of the existence of isoperimetric sets. If the density $f$ converges to $0$ at infinity, one has to expect that isoperimetric sets do not exist (remember that we are assuming $\R^N$ to have infinite volume, otherwise the existence is always true). Indeed, in general a sequence of sets of given volume minimizing the perimeter diverges to infinity, to reach the zones with lowest density, and then actually the infimum of the perimeter for sets of any given volume is generally $0$.\par

On the contrary, if the density $f$ blows up at infinity, one has to expect isoperimetric sets to exist: indeed, in this case the sequences minimizing the perimeter should remain bounded in order not to go where the density is high, and hence the limit of a minimizing sequence $\{E_i\}$ as above should have volume $V$, and then it would be an isoperimetric set. A complete answer to this question has been already given in~\cite{PM13}: if the density is also radial, then isoperimetric sets exist for every volume, as expected (Theorem~3.3 in~\cite{PM13}), but if the density is not radial, then the existence might fail (Proposition~5.3 in~\cite{PM13}), contrary to the intuition.\par

Let us then pass to consider the case when the density, at infinity, is neither converging to $0$ nor diverging. Again, it is very simple to observe that existence generally fails if the density is decreasing, at least definitively; analogously, it is easy to build both examples of existence and of non-existence for oscillating densities (that is, densities for which the $\liminf$ and the $\limsup$, at infinity, are different). Summarizing, for what concerns the existence problem, the only interesting case left is when the density has a finite limit at infinity, and it is converging to that limit from below. This leads us to the following definition.

\begin{definition}\label{defconvdens}
We say that the l.s.c. function $f:\R^N\to \R$ is \emph{converging from below} if there exists $0<a <+\infty$ such that $f(x) \to a$ when $|x|\to \infty$, and $f(x) \leq a$ for $|x|$ big enough.
\end{definition}

Basically, the observations above tell that, for functions $f$ which are not converging densities, there is in general no interesting open question about the existence issue. Indeed, as explained above, in each of these cases it is already known whether isoperimetric sets exist for all volumes or not. Conversely, for some special cases of densities converging from below, the existence problem has been already discussed. In particular, combining the results of~\cite{PM13} and~\cite{CP12}, the existence of isoperimetric sets follows for densities which are continuous and converging from below and which satisfy some technical assumptions, for instance it is enough that $f$ is superharmonic, or that $f$ is radial and for every $c>0$ there is some $R\gg 1$ for which $f(R)\leq a-e^{-cR}$. Moreover, in~\cite{PM13} it was conjectured that isoperimetric sets exist for all volumes if the density is radial and increasing.\par

In this paper we are able to prove the existence result for any density converging from below (this is even stronger than the above-mentioned conjecture); as explained above, this result is sharp.

\begin{theorem}\label{main}
Let $f\in L^1_{\rm loc}(\R^N)$ be a density converging from below. Then, isoperimetric sets exist for every volume.
\end{theorem}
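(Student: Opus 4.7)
The strategy is to argue by contradiction via concentration-compactness. Take a minimizing sequence $\{E_i\}$ with $|E_i|_f=V$ and $P_f(E_i)\to\J(V)$. Since $f$ is lsc, bounded below by a positive constant on each compact set (being positive), and close to $a>0$ at infinity, standard $BV_{\rm loc}$-compactness and lower semicontinuity yield, up to a subsequence, $E_i\to E$ in $L^1_{\rm loc}$ with $P_f(E)\le\J(V)$. Set $V_1:=|E|_f$ and $V_2:=V-V_1$; the goal is to prove $V_2=0$. Suppose for contradiction that $V_2>0$.

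I would first establish the two bounds (with $c_a:=N\omega_N^{1/N}a^{1/N}$)
\[
\J(V)\ \ge\ P_f(E)+c_aV_2^{(N-1)/N}\qquad\text{and}\qquad\J(V)\ \le\ \J(V_1)+c_aV_2^{(N-1)/N}.
\]
The first follows by splitting each $E_i$ at a generically chosen large ball $B_R$ (so the junction terms $\int_{\partial B_R\cap E_i}f$ are small by coarea): lower semicontinuity handles the near piece, while for the far piece the inequalities $a-\eps\le f\le a$ (valid for $|x|\ge R_\eps$) combined with the Euclidean isoperimetric inequality give $P_f(E_i\setminus B_R)\ge(a-\eps)N\omega_N^{1/N}(|E_i\setminus B_R|_f/a)^{(N-1)/N}$; one then lets $i\to\infty$, $R\to\infty$, $\eps\to 0$. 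The second is obtained by taking as competitor $F\cup B$, where $F$ is a near-minimizer of $\J(V_1)$ and $B$ is a disjoint ball of $f$-mass $V_2$ centered far from the origin (so that $P_f(B)\to c_aV_2^{(N-1)/N}$ from below, using $f\le a$ there). Combining both bounds with the trivial $P_f(E)\ge\J(V_1)$ forces equality throughout: in particular $E$ is itself isoperimetric for $V_1<V$ and $\J(V)=\J(V_1)+c_aV_2^{(N-1)/N}$.

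The contradiction is then produced by attaching the escaped mass directly to $E$. Using the regularity theory for $f$-perimeter quasi-minimizers, $\partial^M E$ has a point $x_0$ with a tangent hyperplane, at which the essential value of $f$ is some $f_0\le a$ (choosing $x_0$ in the far region, possibly after relocating $E$ via translations in the compactness step). Centered at $x_0$, consider the Euclidean ball $B$ whose outer half has exactly $f$-mass $V_2$. Passing from $E$ to $E\cup B$ trades the disc $\partial E\cap B$, of $(N-1)$-measure $\omega_{N-1}r^{N-1}$, for the outer hemisphere $\partial B\setminus E$, of $(N-1)$-measure $\tfrac12 N\omega_Nr^{N-1}$; a direct computation gives net $f$-perimeter change $(f_0/a)^{1/N}\theta_N c_a V_2^{(N-1)/N}$, with dimensional constant $\theta_N:=\bigl(\tfrac{N}{2}-\omega_{N-1}/\omega_N\bigr)2^{(N-1)/N}/N$ strictly smaller than $1$ in every dimension $N\ge 2$ (for instance $\theta_2=(\pi-2)/(\pi\sqrt 2)\approx 0.26$). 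Together with $f_0\le a$, this yields $\J(V)\le P_f(E\cup B)<P_f(E)+c_aV_2^{(N-1)/N}=\J(V)$, the desired contradiction.

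The main obstacle will be rigorizing the attachment. In particular one must (i) secure a regular point $x_0$ lying in the far region $\{f\le a\}$, which may require a preliminary translation/relocation step and an iterative concentration-compactness argument ruling out multiple escaping bubbles via the strict subadditivity of $t\mapsto c_at^{(N-1)/N}$; (ii) control the ball $B$ so that it lies entirely in $\{f\le a\}$ and so that $f$ behaves essentially as a constant on it, delicate because $f$ is only lsc and pointwise control at $x_0$ is not guaranteed, likely handled by an averaging or rescaling argument, or by shrinking $B$ and iterating small-step attachments; and (iii) treat separately the edge case $V_1=0$ where the whole mass escapes, which either reduces to the same attachment applied to a far bubble, or (when $f\equiv a$ a.e.\ far from origin) reduces to the classical isoperimetric problem with constant density, for which existence is immediate.
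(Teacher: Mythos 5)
Your skeleton up to the two bounds is essentially the same as the paper's (these are Lemma~\ref{isopF} and equation~\eqref{thm1.2}: the limit $E$ is isoperimetric for its own volume, and $\J(V)=P_f(E)+N(\omega_N a)^{1/N}(V-|E|_f)^{(N-1)/N}$), but then you diverge: the paper produces a competitor \emph{disjoint} from $E$, entirely in the far region $\{f\le a\}$, whose $f$-perimeter is at most $c_a V_2^{(N-1)/N}$ (Propositions~\ref{farball} and~\ref{prop:setmdgen}), and glues it to $E$ (which is bounded, by Lemma~\ref{lemma:morgan}). You instead try to graft a half-ball directly onto $\partial E$. Your computation that a half-ball attached to a flat wall beats a full ball of the same volume by the factor $\theta_N<1$ is correct, and it is a nice observation. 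The problem is the items you label ``rigorization obstacles'' (i)--(iii): they are not technical wrinkles but precisely the place where the theorem lives, and as stated they sink the argument.

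Concretely: (a) There is no reason why $\partial^* E$ should have a regular point in the region $\{f\le a\}$. The set $E$ is bounded (Lemma~\ref{lemma:morgan}), and the region $\{f\le a\}$ is only guaranteed far from the origin; $E$ may well lie in a near-origin region where $f$ is arbitrarily large, so that $(f_0/a)^{1/N}\theta_N\ge 1$ and your perimeter trade gains nothing. You cannot ``relocate $E$ via translations in the compactness step'' --- the problem is not translation invariant, $E$ is a determined $L^1_{\rm loc}$ limit, and any translation destroys the minimality you proved. (b) The scales do not match: the ball $B$ whose outer half carries mass $V_2$ has Euclidean size $\sim(V_2/f_0)^{1/N}$, completely unrelated to the size of $E$. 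If $V_2\gg V_1$, $B$ swallows $E$ and the disc-for-hemisphere bookkeeping collapses. (c) With $f$ merely l.s.c.\ and $L^1_{\rm loc}$, there is no reason why $\H^{N-1}$-a.e.\ regular boundary point of $E$ should be a Lebesgue point of $f$, and l.s.c.\ gives you a lower bound but no upper bound for $f$ near $x_0$, which is what you need to estimate the added hemisphere. Also, regularity for $f$-weighted quasi-minimizers requires $f$ locally bounded above and below, which $L^1_{\rm loc}\cap\{\text{l.s.c.}\}$ does not give. (d) Most seriously, the case $V_1=0$ is not an ``edge case''. When all the mass escapes there is no wall at all; ``the same attachment applied to a far bubble'' is not a set in your argument; and ``reduces to the classical constant-density problem'' is exactly false in the interesting situation $f<a$ on the far region, because a ball there has \emph{smaller} $f$-volume, not just smaller $f$-perimeter, so beating a ``ball at infinity'' requires a genuine construction. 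The paper's Proposition~\ref{farball} (find a unit ball far away with $P_g(B)\ge(N-\eps)|B|_g$, $g=1-f$, by a clever averaging over shifts/rotations) and Proposition~\ref{prop:setmdgen} (dilate one half of that ball to restore the exact volume while keeping mean density $\le 1$) are precisely what fills this hole, and they do not follow from your attachment idea. Unless you can handle $V_1=0$ and control $f_0$ at a boundary point of $E$, the proof does not close.
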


\section{General results about isoperimetric sets\label{sec:genfact}}

In this section we present a couple of general facts about existence and boundedness of isoperimetric sets.\par

As already briefly described in the Introduction, let us fix some $V>0$ and an \emph{isoperimetric sequence} of volume $V$, that is, a sequence of sets $E_j\subseteq \R^N$ such that $|E_j|_f=V$ for any $j$, and $P_f(E_j)\to \J(V)$ for $j\to \infty$. As already observed, if (a subsequence of) $\{E_j\}$ converges in $L^1_{\rm loc}$ to a set $E$, then by lower semicontinuity $P_f(E)\leq \J(V)$, and $|E|_f\leq V$; thus, the set $E$ is automatically isoperimetric of volume $V$ if $|E|_f=V$. However, it is always true that $E$ is isoperimetric \emph{for its own volume}. We stress that this fact is widely known, but we prefer to give the proof to keep the presentation self-contained, and also because we could not find in the literature any proof which works in such a generality. After this lemma, we will show that if there was loss of mass at infinity (that is, if $|E|_f< V$), then $E$ is necessarily bounded.
\begin{lemma}\label{isopF}
Assume that $f\in L^1_{\rm loc}(\R^N)$ and that $f$ is locally bounded from above far enough from the origin. Let $\{E_j\}$ be an isoperimetric sequence of volume $V$ converging in $L^1_{\rm loc}$ to some set $E$. Then, $E$ is an isoperimetric set for the volume $|E|_f$. If in addition $f$ is converging to some $a>0$, then
\begin{equation}\label{thm1.2}
\J(V) = P_f(E) + N(\omega_N a)^{\frac 1N} (V-|E|_f )^{\frac{N-1}N}\,.
\end{equation}
\end{lemma}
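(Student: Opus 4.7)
The plan is to split each $E_j$ into a near part and a far part by cutting along a suitable sphere, and then to play off competitors for $E$ against the mass lost at infinity. Set $m:=|E|_f\le V$; by the $L^1_{\rm loc}$ convergence and lower semicontinuity of $P_f$ one has $P_f(E)\le\J(V)$, and the goal is to show $P_f(E)=\J(m)$, upgraded to \eqref{thm1.2} in the convergent case. The Fubini identity
\[
\int_R^{R+1}\H_f^{N-1}(E_j\cap\partial B_r)\,dr=|E_j\cap(B_{R+1}\setminus B_R)|_f,
\]
together with $|E\setminus B_R|_f\to 0$, lets one select by a diagonal argument radii $r_j\nearrow\infty$ so that $\H_f^{N-1}(E_j\cap\partial B_{r_j})\to 0$, $|E_j\cap B_{r_j}|_f\to m$, and $|E_j\setminus B_{r_j}|_f\to V-m$. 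Setting $A_j:=E_j\cap B_{r_j}$ and $C_j:=E_j\setminus B_{r_j}$, the additivity identity $P_f(E_j)=P_f(A_j)+P_f(C_j)-2\H_f^{N-1}(E_j\cap\partial B_{r_j})$ combined with $\liminf_j P_f(A_j)\ge P_f(E)$ (lsc of $P_f$, since $A_j\to E$ in $L^1_{\rm loc}$) yields
\begin{equation}\label{eqplanstar}
\J(V)=\lim_j P_f(E_j)\ge P_f(E)+\liminf_j P_f(C_j).
\end{equation}

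For the first assertion the bound $P_f(E)\ge\J(m)$ is trivial. Suppose instead $P_f(E)>\J(m)+4\eta$ for some $\eta>0$, and pick a bounded competitor $F\subset B_\rho$ with $|F|_f=m$ and $P_f(F)<\J(m)+\eta$ (any near-optimizer can be truncated using Fubini at a cost $o(1)$ in perimeter and mass, with the mass restored by a disjoint small piece). Once $r_j>2\rho$, the set $F$ is disjoint from $C_j$ and $|F\cup C_j|_f=V+\sigma_j$ with $\sigma_j\to 0$; a small local perturbation of mass $|\sigma_j|$ near $F$ costs only $O(|\sigma_j|^{(N-1)/N})=o(1)$ in perimeter by the local boundedness of $f$, producing an admissible competitor $\widetilde F_j$ of mass exactly $V$ with $P_f(\widetilde F_j)=P_f(F)+P_f(C_j)+o(1)$. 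Therefore $\J(V)\le\J(m)+\eta+P_f(C_j)+o(1)$; passing to the $\liminf$ and comparing with \eqref{eqplanstar} gives $P_f(E)\le\J(m)+\eta$, contradicting the choice of $\eta$.

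Finally, under the assumption $f\to a>0$, formula \eqref{thm1.2} follows from two matching bounds. The ``$\le$'' direction uses as competitor $E$ together with a ball placed far from the origin and carrying mass $V-m$: since $f\to a$, such a ball has perimeter $N(\omega_N a)^{1/N}(V-m)^{(N-1)/N}+o(1)$, whence $\J(V)\le P_f(E)+N(\omega_N a)^{1/N}(V-m)^{(N-1)/N}$. For the ``$\ge$'' direction, fix $\eps>0$ and pick $R_\eps$ with $a(1-\eps)\le f\le a(1+\eps)$ on $\R^N\setminus B_{R_\eps}$; combining the sandwich on $f$ with the Euclidean isoperimetric inequality yields $P_f(G)\ge c(\eps)N(\omega_N a)^{1/N}|G|_f^{(N-1)/N}$ for every $G\subset\R^N\setminus B_{R_\eps}$, with $c(\eps)\to 1$. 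Applied to $G=C_j$ (whose mass tends to $V-m$) and letting $\eps\to 0$, this gives $\liminf_j P_f(C_j)\ge N(\omega_N a)^{1/N}(V-m)^{(N-1)/N}$, which together with \eqref{eqplanstar} closes the chain.

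The main obstacle is the coupled choice of the cutting radii $r_j$: they must be chosen through a single diagonal procedure so that the slice has vanishing $f$-weighted $(N-1)$-measure, the near part $A_j$ converges in $L^1_{\rm loc}$ to $E$, and, for the second assertion, the region beyond $B_{r_j}$ sees $f$ arbitrarily close to $a$. The secondary delicate point is the volume-adjustment step, which genuinely uses the standing hypothesis that $f$ is locally bounded from above far from the origin to realize a vanishing amount of mass with vanishing perimeter — explaining why this hypothesis appears in the statement.
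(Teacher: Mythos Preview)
Your argument is correct and rests on the same ingredients as the paper's proof: slice $E_j$ along a sphere chosen via the coarea/Fubini identity, replace the bounded part by a (truncated) near-optimizer for the volume $|E|_f$, repair the volume with a small piece using the local upper bound on $f$ far from the origin, and---for \eqref{thm1.2}---apply the Euclidean isoperimetric inequality to the far piece where $f\approx a$. The main organizational difference is that you first isolate the clean splitting inequality \eqref{eqplanstar} and then run the contradiction against it, while the paper builds its competitor more explicitly (choosing Lebesgue points $x,\,y$ of a hypothetical better set $F_1$ to manufacture the volume-adjustment gadgets, and comparing perimeters directly with $P_f(E_j)$); your packaging is more streamlined, the paper's is more self-contained in spelling out the truncation and adjustment steps. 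One small point to tighten: in your ``$\le$'' direction for \eqref{thm1.2} you use $E$ together with a far ball as competitor, but $E$ is not yet known to be bounded (Lemma~\ref{lemma:morgan} comes later), so---as the paper does---you should first pass to $E\cap B_R$ at negligible cost in perimeter and mass before adjoining the far ball.
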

\begin{proof}
Let us start proving that $E$ is isoperimetric. As we already observed, $P_f(E)\leq \J(V)$ and $|E|_f\leq V$; as a consequence, if $|E|_f=V$ it is clear that $E$ is isoperimetric, and on the other hand if $|E|_f=0$ then the empty set $E$ is still clearly isoperimetric for the volume $0$. As a consequence, we can assume without loss of generality that $0< |E|_f < V$.\par

Suppose now that the claim is false, and let then $F_1$ be a set satisfying
\begin{align*}
|F_1|_f=|E|_f\,, && \eta:=\frac{P_f(E)-P_f(F_1)}6 >0\,.
\end{align*}
Select now $x\in \R^N$ being a point of density $1$ in $F_1$ and a Lebesgue point for $f$ with $f(x)>0$: such a point exists, in particular $\H^N_f$-a.e. point of $F_1$ can be taken. The assumptions on $x$ ensure that, for every radius $\bar r$ small enough,
\begin{equation}\label{buonvol}
\frac 12 \,\omega_N f(x) \bar r^N \leq |B_{\bar r}(x) \cap F_1|_f 
\leq |B_{\bar r}(x)|_f  \leq 2 \omega_N f(x) {\bar r}^N\,,
\end{equation}
and in turn this implies that there exist arbitrarily small radii $r$ (not necessarily all those small enough) such that
\begin{equation}\label{buonper}
\H^{N-1}_f \big(\partial B_r(x)\big) \leq 2 N\omega_N f(x) r^{N-1}\,.
\end{equation}
Indeed, if the last inequality were false for every $0<r<\bar r$, then by integrating we would get that~(\ref{buonvol}) is false.\par Analogously, let $y$ be a point of density $0$ for $F_1$ which is Lebesgue for $f$ with $f(y)>0$ (the existence of such a point requires that $f\notin L^1(\R^N)$, which on the other hand is surely true because $|E|_f<V$). Since we can find such a point arbitrarily far from the origin (and far from $x$), by assumption it is admissible to assume that $f\leq M$ in a small neighborhood of $y$. As a consequence, there exists some radius $\bar\rho>0$ such that, for every $0<\rho<\bar\rho$,
\begin{align}\label{propdelta0}
\big|B_\rho(y)\setminus F_1 \big|_f \geq \frac{f(y)}2\, \omega_N \rho^N\,, && \H^{N-1}_f\big(\partial B_\rho(y)\big) \leq M N \omega_N \rho^{N-1}\,.
\end{align}
Let us now fix a constant $\delta>0$ such that (up to possibly decrease $\bar\rho$)
\begin{align}\label{propdelta}
\delta < \eta\,, && \frac{f(y)}2\, \omega_N \bar\rho^N > \delta\,, && M N \omega_N \bar\rho^{N-1} < \eta\,.
\end{align}
We claim the existence of some set $F\subseteq \R^N$ and of a big constant $R>0$ (in particular, much bigger than both $|x|$ and $|y|$) such that
\begin{align}\label{defF}
F \subseteq B_R\,, && P_f(F) < P_f(E) - 5\eta\,, &&  0< \delta':= |E|_f -|F|_f< \frac \delta 2\,,
\end{align}
writing for brevity $B_R=B_R(0)$. To show this, it is useful to consider two possible cases. If $F_1$ is bounded, we define $F=F_1\setminus B_r(x)$ for some $r$ very small such that both~(\ref{buonvol}) and~(\ref{buonper}) hold true. Then, the inclusion $F\subseteq B_R$ is true for every $R$ big enough, and the two inequalities in~(\ref{defF}) immediately follow by~(\ref{buonvol}), (\ref{buonper}) and the definition of $\eta$ as soon as $r$ is sufficiently small. Instead, if $F_1$ is not bounded, then we define $F=F_1\cap B_R$ for a big constant $R$: of course the inclusion $F\subseteq B_R$ is automatically satisfied, and the inequality about $\delta'$ is also true for every $R$ big enough, say $R>R_0$. Concerning the inequality on $P_f(F)$, if it were false for every $R>R_0$, then for every $R>R_0$ it would be
\[
\H^{N-1}_f\big(F_1\cap \partial B_R\big) \geq \eta\,,
\]
and then by integrating we would get
\[
V > |F_1|_f \geq |F_1\setminus B_{R_0}|_f = \int_{R_0}^{+\infty} \H^{N-1}_f \big(F_1 \cap \partial B_R\big) = +\infty\,,
\]
and the contradiction shows the existence of some suitable $R$, thus the existence of $F$ satisfying~(\ref{defF}) is proved.\par
We can now select some $R'>R$ such that
\begin{align}\label{hypR}
|E\setminus B_{R'}|_f < \frac{\delta'}2\,,  &&  \H^{N-1}_f(\partial E\cap B_{R'}) > P_f(E) - \eta\,.
\end{align}
Since $E_j\cap B_{R'}$ (resp., $E_j \cap B_{R'+1}$) converges in the $L^1$ sense to $E\cap B_{R'}$ (resp., $E \cap B_{R'+1}$), for every $j$ big enough we have
\begin{gather}
|E|_f -\delta' < |E_j\cap B_{R'}|_f \leq |E_j\cap B_{R'+1}|_f < |E|_f+ \delta'\,, \label{R'bigvol} \\
\H^{N-1}_f(\partial E \cap B_{R'})\leq \H^{N-1}_f(\partial E_j \cap B_{R'})+\eta\,. \label{R'bigper}
\end{gather}
Arguing as above, by~(\ref{R'bigvol}) we have
\[
\delta>  2\delta' \geq \Big|E_j \cap \big(B_{R'+1}\setminus B_{R'}\big)\Big|_f
= \int_{R'}^{R'+1} \H^{N-1}_f (E_j\cap \partial B_t)\, dt\,,
\]
so we can find some $R_j\in (R',R'+1)$ such that, also recalling~(\ref{propdelta}),
\begin{equation}\label{choiceR_j}
\H^{N-1}_f (E_j\cap \partial B_{R_j}) <\delta < \eta\,.
\end{equation}
Observe that, since $|E_j|=V$ by definition, (\ref{R'bigvol}) implies
\[
V - |E|_f -\delta' < |E_j\setminus B_{R_j}|_f < V - |E|_f +\delta'\,.
\]
As a consequence, calling $G_j = F \cup \big( E_j \setminus B_{R_j}\big)$ and also recalling~(\ref{defF}), (\ref{hypR}), (\ref{R'bigper}) and~(\ref{choiceR_j}), we can estimate the volume of $G_j$ by
\begin{equation}\label{volGj}
|G_j|_f = |F|_f + |E_j \setminus B_{R_j}|_f = |E|_f - \delta' + |E_j \setminus B_{R_j}|_f \in (V- \delta, V)\,,
\end{equation}
and the perimeter of $G_j$ by
\begin{equation}\label{perGj}\begin{split}
P_f(G_j) &= P_f(F) + P_f(E_j\setminus B_{R_j})\\
&< P_f (E) - 5\eta + \H^{N-1}_f(\partial E_j\setminus B_{R_j}) + \H^{N-1}_f (E_j \cap \partial B_{R_j})\\
&<\H^{N-1}_f(\partial E \cap B_{R'})  + \H^{N-1}_f(\partial E_j\setminus B_{R_j}) - 3\eta
\leq P_f (E_j)- 2\eta\,.
\end{split}\end{equation}
Finally, we define the competitor $\widetilde E_j = G_j \cup B_{\rho_j}(y)$, where $\rho_j<\bar\rho$ is the constant such that $|\widetilde E_j|_f = V$ --this is possible by~(\ref{volGj}), (\ref{propdelta0}), and~(\ref{propdelta}). Applying then again~(\ref{propdelta0}) and~(\ref{propdelta}), from~(\ref{perGj}) we deduce
\[
P_f(\widetilde E_j) < P_f(E_j) - \eta
\]
for every $j$ big enough, and this gives the desired contradiction with the fact that the sequence $E_j$ was isoperimetric. This finally shows that $E$ is an isoperimetric set for the volume $|E|_f$.\par\medskip

Let us now pass to the second part of the proof, namely, we assume that $f$ is converging to some $a>0$ (not necessarily from below), and we aim to prove~(\ref{thm1.2}). Notice that we can assume without loss of generality that $|E|_f < V$, since otherwise~(\ref{thm1.2}) is a direct consequence of the fact that $E$ is isoperimetric.\par

Arguing as in the first part of the proof, for every $\eps>0$ we can find a very big $R$ such that, calling $F=E\cap B_R$, it is
\begin{align*}
|F|_f \geq |E|_f - \eps\,, && P_f(F) \leq P_f(E) +\eps\,.
\end{align*}
Let then $B$ a ball with volume $|B|_f = V - |F|_f$: if we take this ball far enough from the origin, then $B\cap F=\emptyset$, thus $|G|_f=V$ being $G=F\cup B$; moreover, again up to take the ball far enough, we have $a-\eps\leq f \leq a+\eps$ on the whole $B$. As a consequence, calling $r$ the radius of $B$, we have
\[
V-|E|_f + \eps\geq 
V-|F|_f = |B|_f \geq (a-\eps) \omega_N r^N\,,
\]
from which we get
\[\begin{split}
\J(V) &\leq P_f(G) = P_f(F) + P_f(B) 
\leq P_f(E) + \eps + (a+\eps) N \omega_N r^{N-1}\\
&\leq P_f(E) + \eps +  \frac{a+\eps}{(a-\eps)^{\frac{N-1}N}}\,N\omega_N^{\frac 1N}\,\Big(V-|E|_f + \eps\Big)^{\frac{N-1}N}\,,
\end{split}\]
which in turn implies the first inequality in~(\ref{thm1.2}) by letting $\eps\to 0$.\par

To show the other inequality, consider again the isoperimetric sequence $\{E_j\}$; for any given $\eps>0$, exactly as in the first part we can find an arbitrarily big $R$ so that $a-\eps\leq f\leq a+\eps$ out of $B_R$ and
\begin{align*}
|E\cap B_R|_f \geq |E|_f - \eps\,, && P_f (E\setminus B_R) \leq \eps\,.
\end{align*}
For every $j\gg 1$, then, we can find some $R_j \in (R,R+1)$ so that
\begin{align*}
|E_j\cap B_{R_j}|_f \leq |E|_f +\eps \,, && 
\H^{N-1}_f (E_j \cap \partial B_{R_j}) \leq 2\eps\,, && 
P_f(E) \leq P_f(E_j\cap B_{R_j}) +2\eps\,.
\end{align*}
Since $a-\eps\leq f\leq a+\eps$ out of $B_R$, we deduce
\[\begin{split}
P_f (E_j\setminus B_{R_j}) &\geq (a-\eps) P_{\rm eucl} (E_j\setminus B_{R_j}) \geq (a-\eps) N\omega_N^{\frac 1N}|E_j\setminus B_{R_j}|_{\rm eucl}^{\frac{N-1}N}\\
&\geq \frac{a-\eps}{(a+\eps)^{\frac{N-1}N}}\,N\omega_N^{\frac 1N}|E_j\setminus B_{R_j}|_f^{\frac{N-1}N}
\geq \frac{a-\eps}{(a+\eps)^{\frac{N-1}N}}\,N\omega_N^{\frac 1N}\Big(V-|E|_f -\eps\Big)^{\frac{N-1}N}\,,
\end{split}\]
which in turn gives
\[\begin{split}
P_f(E_j) &= P_f(E_j\cap B_{R_j} ) + P_f(E_j\setminus B_{R_j}) - 2\H^{N-1}_f (E_j\cap \partial B_{R_j})\\
&\geq P_f(E) - 6\eps +\frac{a-\eps}{(a+\eps)^{\frac{N-1}N}}\,N\omega_N^{\frac 1N}\Big(V-|E|_f -\eps\Big)^{\frac{N-1}N}\,.
\end{split}\]
Since $P_f(E_j)\to \J(V)$ for $j\to\infty$, by sending $\eps\to 0$ in the last estimate yields the second inequality in~(\ref{thm1.2}), thus the proof is concluded.
\end{proof}

\begin{remark}{\rm
Actually, the claim of Lemma~\ref{isopF} can be proved even with weaker assumptions; more precisely, one could apply the results of~\cite{CP12} to extend the validity to the more general case when $f$ is ``essentially bounded'' in the sense of~\cite{CP12}.}
\end{remark}

The second result that we present is a clever observation, which we owe to the courtesy of Frank Morgan, and which shows that whenever a density converges to a limit $a>0$ (not necessarily from below), then if an isoperimetric sequence is losing mass at infinity the remaining limiting set --which is isoperimetric thanks to Lemma~\ref{isopF}-- is bounded.

\begin{lemma}[Morgan]\label{lemma:morgan}
Let the density $f$ converge to some $a>0$, and let the isoperimetric sequence $\{E_j\}$ of volume $V$ converge in $L^1_{\rm loc}$ to a set $E$ with $|E|_f<V$. Then, $E$ is bounded.
\end{lemma}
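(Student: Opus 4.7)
My plan is to argue by contradiction: I assume that $E$ is unbounded, i.e.\ $m(R):=|E\setminus B_R|_f>0$ for every $R>0$ (note $m(R)\to 0$ since $|E|_f\le V<\infty$), and I construct a set of $f$-volume $V$ with perimeter strictly below $\J(V)$. The benchmark to beat is the identity from Lemma~\ref{isopF},
\[
\J(V) = P_f(E) + \beta\,(V-|E|_f)^{(N-1)/N}, \qquad \beta := N(\omega_N a)^{1/N},
\]
which says that $E$ joined with a ``ball at infinity'' of $f$-volume $V-|E|_f$ is essentially optimal. The strategy is to truncate $E$ at a cleverly chosen radius and absorb the removed tail, together with the missing mass, into a single bigger ball at infinity; the strict concavity of $t\mapsto t^{(N-1)/N}$ combined with the Euclidean isoperimetric inequality will produce the gain.

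To select the truncation radii I use the coarea identity $-m'(R)=\H^{N-1}_f(E\cap\partial B_R)$ for a.e.\ $R$ together with the set
\[
A:=\bigl\{R>0:\,\H^{N-1}_f(E\cap\partial B_R)\le\tfrac13\beta\,m(R)^{(N-1)/N}\bigr\}.
\]
I claim that $A\cap[R_0,+\infty)$ has positive measure for every $R_0$: otherwise one would have $-m'(R)>\tfrac\beta3 m(R)^{(N-1)/N}$ for a.e.\ $R\ge R_0$, hence $(m^{1/N})'(R)<-\beta/(3N)$ a.e., and integrating would force $m(R)^{1/N}$ to become negative in finite time, contradicting $m>0$.

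Fixing $R\in A$ large enough that $|f-a|\le\eps=\eps(R)$ on $\R^N\setminus B_R$, I set $m:=m(R)$ and define the competitor $\widetilde E:=(E\cap B_R)\cup B^\ast$, where $B^\ast$ is a ball disjoint from $B_R$, centred so far from the origin that $|f-a|\le\eps'$ on $B^\ast$ for a parameter $\eps'$ to be fixed later, and whose Euclidean radius is tuned so that $|B^\ast|_f=V-|E|_f+m$; in particular $|\widetilde E|_f=V$. The Euclidean isoperimetric inequality combined with the sandwich on $f$ yields
\[
P_f(E\setminus B_R)\ge\frac{a-\eps}{(a+\eps)^{(N-1)/N}}\,N\omega_N^{1/N}\,m^{(N-1)/N}=:\beta_\eps\,m^{(N-1)/N},
\]
and analogously $P_f(B^\ast)\le\beta_{\eps'}(V-|E|_f+m)^{(N-1)/N}$, with $\beta_\eps\to\beta$ as $R\to\infty$ and $\beta_{\eps'}\to\beta$ as $\eps'\to 0$. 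Plugging these into the partition identity $P_f(E\cap B_R)=P_f(E)-P_f(E\setminus B_R)+2\,\H^{N-1}_f(E\cap\partial B_R)$, using $R\in A$, and invoking the concavity bound $(V-|E|_f+m)^{(N-1)/N}\le(V-|E|_f)^{(N-1)/N}+\frac{N-1}{N}(V-|E|_f)^{-1/N}\,m$, I reach
\[
P_f(\widetilde E)-\J(V)\le\bigl(\tfrac23\beta-\beta_\eps\bigr)\,m^{(N-1)/N}+(\beta_{\eps'}-\beta)(V-|E|_f)^{(N-1)/N}+C\,m,
\]
with $C$ depending only on $N$, $a$, $|E|_f$, $V$.

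The contradiction is then produced by the correct sequential choice of parameters. First I pick $R\in A$ so large that $\beta_\eps\ge\tfrac56\beta$ and $Cm\le\tfrac1{24}\beta\, m^{(N-1)/N}$, both possible since $\eps(R)\to 0$ and $m\ll m^{(N-1)/N}$ as $m\to 0$; only afterwards, with $R$ (hence $m$) frozen, I translate $B^\ast$ so far from the origin that $(\beta_{\eps'}-\beta)(V-|E|_f)^{(N-1)/N}\le\tfrac1{24}\beta\, m^{(N-1)/N}$. The three terms then sum to a quantity not exceeding $-\tfrac1{12}\beta\, m^{(N-1)/N}<0$, contradicting the definition of $\J(V)$. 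The main obstacle I anticipate is precisely this delicate sequential bookkeeping of the small parameters $\eps(R)$, $m(R)$ and $\eps'$: they must be handled in the right order so that the two $o(1)$ errors due to $f\not\equiv a$ (one at the truncation shell, one on the added ball) do not overwhelm the principal gain of order $m^{(N-1)/N}$ produced by the strict concavity of $t\mapsto t^{(N-1)/N}$.
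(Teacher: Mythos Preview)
Your argument is correct and rests on the same competitor $(E\cap B_R)\cup B^\ast$, the same identity~\eqref{thm1.2}, and the same pair of estimates (Euclidean isoperimetric inequality on the tail $E\setminus B_R$, concavity of $t\mapsto t^{(N-1)/N}$) as the paper. The only difference is organizational: the paper applies the competitor at \emph{every} radius $t$ to obtain directly the differential inequality $Cm(t)\ge 2m'(t)+C_1^{-1}m(t)^{(N-1)/N}$, hence $-m'\ge c\,m^{(N-1)/N}$ for small $m$, which forces $m$ to vanish in finite time; you instead run that same ODE in contrapositive to produce the good-radius set $A$, and then make a second pass at a single $R\in A$ to build a strict competitor and reach a contradiction.
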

\begin{proof}
Assume that $|E|_f<V$. Then, for every $t>0$ define
\[
m(t) = |E\setminus B_t|_f = \int_t^\infty \H_f^{N-1} (E\cap \partial B_\sigma)\,d\sigma\,.
\]
For every $t$, we can select a ball $B$ of volume $V-|E|_f+ m(t)$ far away from the origin, in order to have no intersection with $E\cap B_t$; thus, the set $(E\cap B_t) \cup B$ has precisely volume $V$, hence $\J(V) \leq P_f (E\cap B_t) + P_f(B)$. Since the ball $B$ can be taken arbitrarily far from the origin, thus in a region where $f$ is arbitrarily close to $a$, exactly as in the second part of the proof of Lemma~\ref{isopF} we deduce
\[
\J(V) \leq P_f (E \cap B_t) + N (a\omega_N)^{\frac 1N} \big(V-|E|_f+ m(t)\big)^{\frac{N-1}N}\,.
\]
Recalling that $|E|_f<V$ and comparing the last inequality with~(\ref{thm1.2}), we obtain
\[
P_f(E) \leq P_f (E\cap B_t) + C m(t)
\]
for some strictly positive constant $C$. Notice now that
\[
P_f(E) = P_f(E\cap B_t) + P_f(E\setminus B_t) - 2 \H^{N-1}_f(E\cap \partial B_t)
= P_f(E\cap B_t) + P_f(E\setminus B_t) + 2 m'(t)\,,
\]
and in turn by the (Euclidean) isoperimetric inequality if $t\gg 1$ we have
\[
P_f(E\setminus B_t) \geq (a-\eps) P_{\rm eucl}(E\setminus B_t)
\geq (a-\eps) N\omega_N^{\frac 1N} |E\setminus B_t|^{\frac{N-1}N}_{\rm eucl}
\geq \frac{a-\eps}{(a+\eps)^{\frac{N-1}N}}\, N\omega_N^{\frac 1N} m(t)^{\frac{N-1}N}\,.
\]
Putting everything together, we get
\[
Cm(t) \geq 2 m'(t) + \frac 1{C_1} m(t)^{\frac{N-1}N}
\]
for some other constant $C_1>0$. And in turn, if $t\gg 1$ then $m(t)\ll 1$, thus the last estimate implies
\[
m(t) \leq C_2 \big(- m'(t)\big)^{\frac N{N-1}}\,.
\]
Finally, it is well known that a positive decreasing function $m$ which satisfies the above differential inequality vanishes in a finite time. Hence, $m(t)=0$ for $t$ big enough, and this means precisely that $E$ is bounded.
\end{proof}

\section{Proof of the main result\label{proofmain}}

This section is devoted to show the main result of the paper, namely, Theorem~\ref{main}. Our overall strategy is quite simple, and already essentially contained in~\cite{PM13}. The idea is to take an isoperimetric sequence of volume $V$, and to consider a limiting set $E$ (up to a subsequence, this is always possible); if $|E|_f=V$, then there is nothing to prove because, as we already saw several times, the set $E$ is already the desired isoperimetric set of volume $V$. Instead, if $|E|_f<V$, we know by Lemma~\ref{isopF} that $E$ is an isoperimetric set for volume $|E|_f$, and by Lemma~\ref{lemma:morgan} that $E$ is bounded. Moreover, formula~(\ref{thm1.2}) says that an isoperimetric set of volume $V$ can be found as the union of $E$ and a ``ball at infinity'' with volume $V-|E|_f$. By ``ball at infinity'' we mean an hypothetical ball where the density is constantly $a$: such a ball needs not really to exist, but a sequence of balls of correct volume which escape at infinity will have a perimeter which converges to that of this ``ball at infinity''. In other words, a sequence of sets done by the union of $E$ and a ball escaping at infinity is isoperimetric thanks to~(\ref{thm1.2}). Our strategy is then simple: we look for a set $B$, far away from the origin, which is \emph{better} than a ball at infinity, that is, which has the same volume and less perimeter than it. Since $E$ is bounded (this is a crucial point, from which the importance of Lemma~\ref{lemma:morgan}) the sets $E$ and $B$ have no intersection, thus the union of $E$ with $B$ is isoperimetric. As one can see, the only thing to do is to find a set of given volume, arbitrarily far from the origin, which is ``better'' than a ball at infinity.\par

First of all, let us express in a useful way the fact of being better than a ball at infinity, by means of the following definition.
\begin{definition}
We say that the set $E\subseteq \R^N$ of finite volume has \emph{mean density} $\rho$ if
\[
P_f(E) = N(\omega_N \rho)^\frac 1N |E|_f^\frac{N-1}N\,.
\]
\end{definition}
The meaning of this definition is evident: $\rho$ is the unique number such that, if we endowe $\R^N$ with the constant density $\rho$, then balls of volume $|E|_f$ have perimeter $P_f(E)$. The convenience of this notion is also clear: being ``better than a ball at infinity'' simply means having mean density less than $a$.\par

We can then continue our description of the proof of Theorem~\ref{main}: we are left to find a set of volume $V-|E|_f$ arbitrarily far from the origin and having mean density at most $a$. Since we want to find isoperimetric set for any volume $V$, and we cannot know a priori how much $|E|_f$ is, we need to find sets of mean density less than $a$ \emph{of any volume} and \emph{arbitrarily far from the origin}. Actually, by a trivial rescaling argument, we can assume that $a=1$ and reduce ourselves to search for a set of volume $\omega_N$. Since $f$ is converging to $1$ and we must work very far from the origin, everything will be very close to the Euclidean case, hence a set of volume $\omega_N$ and mean density less than $1$ (or, equivalently, with perimeter less than $N \omega_N$) must be extremely close to a ball of radius $1$. The first big step in our proof will then be to find a ball of radius $1$ arbitrarily far from the origin, and with mean density less than $1$.\par

Surprisingly enough, this will by no means conclude the proof, due to a seemingly minor problem: indeed, since $f$ converges to $1$ from below, the ball of radius $1$ that we have found does not have exactly volume $\omega_N$, but only a bit less. And, the far from the origin the ball is, the smaller this gap will be, but still positive. Notice that at this point we cannot again rely on a rescaling argument: we have already rescaled in order to reduce ourselves to the case of volume $\omega_N$, but then any other volume will not solve the problem (in principle, it could be that there are sets of mean density less than $1$ only for all the rational volumes, and for no irrational one\dots). Hence, the second big step in our proof will be to slightly modify the ball found in the first big step, in such a way that the volume increases up to exactly $\omega_N$, while the mean density remains smaller than $1$. At that point, the proof will be concluded. It is to be mentioned that the proof of this second fact is more delicate than the one of the first!\par

Let us now state precisely the claims of the two big steps, and then give the formal proof of Theorem~\ref{main} --which is more or less exactly what we have just described informally. Then, we will conclude the paper with two sections, which are devoted to present the proof of the two big claims.

\begin{prop}\label{farball}
Let $f$ be a density converging from below to $1$, and set $g=1-f$. Then, for every $\eps>0$ there exists a ball $B$ with radius $1$ and arbitrarily far from the origin such that
\[
P_g(B)\geq (N-\eps)|B|_g\,.
\]
\end{prop}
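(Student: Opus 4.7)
The plan is to argue by contradiction, using an averaging over the centre of the unit ball. Suppose to the contrary that for some $\eps>0$ there exists $R_0>0$ such that $P_g(B_1(x))<(N-\eps)|B_1(x)|_g$ for every $|x|\geq R_0$. Choosing $R\gg R_0$ and integrating this strict pointwise inequality over $x$ in the annulus $A:=B_R\setminus B_{R_0}$, I would then rewrite the two resulting integrals by Fubini, extract a positive ``bulk'' contribution and negligible ``boundary'' corrections, and close the contradiction by letting $R_0,R\to\infty$ appropriately.

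Carrying this out, Fubini gives
\[
\int_A P_g(B_1(x))\,dx=\int g(y)\,\H^{N-1}(\partial B_1(y)\cap A)\,dy,\qquad \int_A |B_1(x)|_g\,dx=\int g(y)\,|B_1(y)\cap A|\,dy.
\]
The decisive algebraic step is to apply the divergence theorem to the vector field $z\mapsto z-y$ on $B_1(y)\cap A$: since $\mathrm{div}(z-y)\equiv N$ and $(z-y)\cdot\nu=1$ on $\partial B_1(y)$, one obtains
\[
N\,|B_1(y)\cap A|=\H^{N-1}(\partial B_1(y)\cap A)+F(y),\qquad F(y):=\int_{B_1(y)\cap\partial A}(z-y)\cdot\nu_A\,d\H^{N-1}(z),
\]
and after rearrangement the averaged inequality reads $\eps\int g(y)\,|B_1(y)\cap A|\,dy<\int g(y)\,F(y)\,dy$.

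For $y$ in the bulk $\{R_0+1\leq|y|\leq R-1\}$ one has $|B_1(y)\cap A|=\omega_N$ and $F(y)=0$, so the left-hand side contains a strictly positive contribution of $\eps\omega_N$ times the bulk $g$-mass. The flux $F(y)$ is supported only on the two shells $\{R_0-1\leq|y|\leq R_0+1\}$ and $\{R-1\leq|y|\leq R+1\}$, where the explicit spherical identity
\[
(z-y)\cdot(-z/R_0)=\frac{|y|^2-R_0^2-|z-y|^2}{2R_0}
\]
(and its analogue on $\partial B_R$) gives a sharp pointwise bound on $F(y)$. Combined with $g\geq 0$ and $g\to 0$ near infinity, this lets one drive the outer-shell integral of $gF$ to $0$ as $R\to\infty$, while the inner-shell contribution is fixed and beaten by the bulk term once $R_0$ is taken large enough relative to $\eps$. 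This should close the contradiction.

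The principal obstacle I anticipate is the quantitative bookkeeping of shell-versus-bulk $g$-mass. The hypothesis $g\to 0$ gives no a priori decay rate, so in principle $g$ could concentrate on thin spherical shells near infinity and frustrate the naive comparison. Handling this should require choosing the outer radius $R$ along a subsequence that avoids concentrations of $g$, and then exploiting the $1/R_0$ (resp.\ $1/R$) improvement in the spherical identity above to make the shell fluxes genuinely small. Balancing these two mechanisms---bulk domination and the flatness-of-sphere gain at infinity---is where I expect the bulk of the technical work to lie.
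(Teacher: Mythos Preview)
Your averaging-over-centres strategy is sound and is in the same spirit as the paper's argument, and your divergence-theorem identity $N|B_1(y)\cap A|=\H^{N-1}(\partial B_1(y)\cap A)+F(y)$ is exactly the right algebraic core. But the gap you flag at the end is real, and the remedy you propose does not actually close it. In the spherical identity
\[
(z-y)\cdot\Big(-\frac{z}{R_0}\Big)=\frac{|y|^2-R_0^2-|z-y|^2}{2R_0}\,,
\]
the term $-|z-y|^2/(2R_0)$ is indeed $O(1/R_0)$, but the dominant term $\tfrac{|y|^2-R_0^2}{2R_0}=\tfrac{(|y|-R_0)(|y|+R_0)}{2R_0}\approx |y|-R_0$ is $O(1)$ on the shell $\{R_0-1\leq|y|\leq R_0+1\}$. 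So $|F(y)|$ is genuinely of order one, not $O(1/R_0)$; there is no ``flatness-of-sphere gain'' at leading order. Consequently the inner-shell flux $\int gF_{\rm inner}$ can be as large as a constant times $\int_{\text{inner shell}} g$, and your inequality only yields $\eps\,\omega_N\!\int_{|y|>R_0+1}g\le C\!\int_{R_0-1\le|y|\le R_0+1}g$. This merely forces the tail $\int_{|y|>r}g$ to decay geometrically in $r$, which is perfectly consistent---no contradiction results. The outer-shell issue can be handled by a pigeonhole choice of $R$, as you suggest, but the inner shell cannot be beaten with only what you have written.

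The paper avoids this obstruction by a two-step manoeuvre. First (Step~I of its proof), it reduces to the radial case via the spherical average $\tilde f(x)=\intmed_{\partial B_{|x|}}f$: if no ball works for $f$, then averaging over rotations shows no ball works for $\tilde f$ either. Once $g$ is radial, the integration over centres becomes \emph{one-dimensional} (integrate over the distance $R$ of the centre from the origin), so the ``shells'' degenerate to two points and the outer boundary term is $\int_{R''-1}^{R''+1}g(s)\,[\cdot]\,ds$ over a fixed-length interval, which genuinely tends to $0$ because $g\to0$. The inner boundary term still has no sign, and the paper handles it by a \emph{second} integration (this is Lemma~3.1 invoked inside Lemma~3.2), ultimately reducing to the positivity of the primitive $\int_{-1}^\sigma\alpha(t)\,dt$---equivalently, the elementary fact that for every proper spherical cap of the unit ball the perimeter exceeds $N$ times the volume. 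The radial reduction plus this double integration is precisely what replaces the shell bookkeeping that stalls your $N$-dimensional approach.
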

\begin{prop}\label{prop:setmdgen}
Let $f$ be a density converging from below to $1$. Then, there exists a set $E$ with volume $\omega_N$ and mean density smaller than $1$ arbitrarily far from the origin.
\end{prop}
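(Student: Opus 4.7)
\medskip
\noindent\textbf{Proof plan for Proposition~\ref{prop:setmdgen}.}

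The starting point is Proposition~\ref{farball}. Fix a small $\eps\in(0,1)$ to be tuned later, and apply that proposition to obtain a ball $B=B_1(x_0)$ with $x_0$ arbitrarily far from the origin and $P_g(B)\ge (N-\eps)\delta$, where $\delta:=|B|_g=\omega_N-|B|_f$. Since $g\to 0$ at infinity, we may assume $\delta$ is as small as we like. A direct computation shows $B$ itself already has mean density strictly less than $1$: indeed $P_f(B)=N\omega_N-P_g(B)\le N\omega_N-(N-\eps)\delta$ and $|B|_f=\omega_N-\delta$, so
\[
\rho_B\,=\,\frac{P_f(B)^N}{(N\omega_N^{1/N})^N\,|B|_f^{N-1}}
\,=\,1-\frac{(1-\eps)\,\delta}{\omega_N}+O(\delta^2)\,<\,1.
\]
The \emph{only} thing missing is the volume: we have $|B|_f=\omega_N-\delta<\omega_N$, and we must inflate $B$ by exactly $\delta$ units of $f$-mass without losing the mean-density bound.

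The natural candidate is the concentric enlargement $B^*=B_{t^*}(x_0)$, where $t^*\ge 1$ is the unique radius with $|B^*|_f=\omega_N$; such $t^*$ exists by continuity and monotonicity, and using $f\le 1$ far from the origin one checks $t^*-1\sim \delta/(N\omega_N)\to 0$. The condition $\rho_{B^*}<1$ is equivalent to $P_f(B^*)<N\omega_N$, i.e.,
\[
P_g(B^*) \,>\, N\omega_N\bigl((t^*)^{N-1}-1\bigr) \,=\,(N-1)\delta+o(\delta).
\]

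\smallskip
\noindent\textbf{Main obstacle.} Proposition~\ref{farball} controls $P_g(B)=\int_{\partial B_1(x_0)}g$, but the quantity we need involves $P_g(B^*)=\int_{\partial B_{t^*}(x_0)}g$, the integral of $g$ on a \emph{different} sphere. Since $f$ is only l.s.c., $g$ is only u.s.c., so the map $t\mapsto P_g(B_t(x_0))$ need not be continuous, and the estimate on $P_g(B)$ does not transfer. Worse, by coarea,
\[
\int_1^{t^*}P_g(B_s(x_0))\,ds \,=\, |B^*|_g-\delta \,=\,\omega_N\bigl((t^*)^N-1\bigr)-\delta \,=\, o(\delta),
\]
so the \emph{average} of $P_g(B_s)$ on $[1,t^*]$ is too small to give the required bound by any naive radial averaging. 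Hence a purely radial enlargement is not enough.

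\smallskip
\noindent\textbf{Strategy to overcome the obstacle.} Instead of enlarging radially, the plan is to modify $B$ \emph{non-radially}, exploiting the fact that the large value of $P_g(B)$ forces $g$ to be concentrated (in integrated sense) on a substantial portion of $\partial B$. Concretely, one identifies a subset $C\subset\partial B$ that carries most of the $g$-mass, and enlarges $B$ outward only in the directions of $C$; the ``cap'' so attached has the feature that the piece of $\partial B$ swallowed into the interior carries substantial $g$ (saving $f$-perimeter via $P_f=P_1-P_g$), while the new outer boundary lies in a region where $f$ is close to $1$. One tunes the cap so that the total added $f$-volume is exactly $\delta$ and the accounting
\[
P_f(E) \,=\, P_f(B) + (\text{outer cap perimeter}) - 2\!\int_{C}\!f\,d\H^{N-1}
\]
yields $P_f(E)<N\omega_N$. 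The freedom to choose $x_0$ arbitrarily far (with the quantitative control on $P_g(B)/|B|_g$ improving as $\eps\to 0$) allows one to select a center where the $g$-profile inside and near $\partial B$ is regular enough to carry out this construction.

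The core difficulty is to implement this non-radial surgery rigorously, given only the u.s.c.\ regularity of $g$: this requires a careful slicing/coarea argument on $C$ together with a multi-scale application of Proposition~\ref{farball} (applying it with $\eps$ small enough that the slack $(1-\eps)\delta/\omega_N$ in the mean density of $B$ dominates the suboptimality of the surgery). This delicate balancing is the ``more delicate'' step anticipated by the authors.
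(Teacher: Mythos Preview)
Your diagnosis of the difficulty is correct: radial (concentric) enlargement of $B$ fails, and some non-radial modification is needed. But the strategy you outline---locating a subset $C\subset\partial B$ where $g$ is concentrated and attaching a cap there---does not lead to a proof, and it is not what the paper does.

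The missing idea is this. The paper does not try to find where $g$ is large on $\partial B$ (which, as you note, is hopeless for merely u.s.c.\ $g$). Instead it splits $B$ into two half-balls $B^\pm$ along a hyperplane through its center and the origin, and \emph{rotates} one half, say $B^+$, by a small angle $\sigma$ about the origin, filling in the thin wedge swept out. The resulting set $E_\delta$ has boundary consisting of the original half-sphere $\partial^-B^-$, the rotated half-sphere $\partial^+B^+_\delta$, and a lateral piece of $f$-area roughly $(N-1)\omega_{N-1}R\delta$. One chooses $\bar\delta$ so that $|E_{\bar\delta}|_f=\omega_N$, which forces $\bar\delta\approx |B|_g/(\omega_{N-1}R)$; hence the lateral contribution is $\approx (N-1)|B|_g$. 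If $f$ were radial, the rotated half-sphere would carry the same $g$-measure as before, and the estimate would close immediately: $P_f(E)\le N\omega_N-P_g(B)+(N-1+o(1))|B|_g<N\omega_N$ since $P_g(B)\ge(N-\eps)|B|_g$.

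For general $f$ the rotated half-sphere need not have the same $g$-measure, and this is the actual obstacle. The paper overcomes it by an \emph{averaging argument over rotations}: one lets the starting direction $\theta$ vary over a circle $\C\subset\S^{N-1}$, obtaining a family $E^\theta$ all of volume $\omega_N$. The map $\theta\mapsto\tau(\theta)=\theta+\bar\delta(\theta)$ is a bi-Lipschitz self-bijection of $\C$ with derivative in $[1-\eps,(1-\eps)^{-1}]$, so the average of $\H^{N-1}_g(\partial^+B^{\tau(\theta)}_R)$ over $\theta\in\C$ is within a factor $(1-\eps)^{-1}$ of the average of $\H^{N-1}_g(\partial^+B^\theta_R)$. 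Combined with the circle-averaged version of the inequality from Proposition~\ref{farball}, this yields some $\bar\theta\in\C$ for which $P_f(E^{\bar\theta})<N\omega_N$. Your proposal contains no analogue of this averaging step; the vague ``multi-scale application of Proposition~\ref{farball}'' does not substitute for it. (Incidentally, your perimeter accounting is off: attaching a cap along $C$ gives $P_f(E)=P_f(B)-\int_C f+(\text{outer cap $f$-perimeter})$, not $-2\int_C f$; with the correct formula the scheme does not obviously close.)
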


\proofof{Theorem~\ref{main}}
Let $\{E_j\}$ be an isoperimetric sequence of volume $V$, and let $E$ be the $L^1_{\rm loc}$ limit of a suitable subsequence. If $|E|_f=V$ then the proof is already concluded. Otherwise, we know that $E$ is bounded by Lemma~\ref{lemma:morgan} and that~(\ref{thm1.2}) holds. Up to a rescaling, we can assume that $f$ converges from below to $1$, and that $V-|E|_f=\omega_N$. By Proposition~\ref{prop:setmdgen} we can find a set $F$ not intersecting $E$ with volume $\omega_N$ and mean density less than $1$, which means $P_f(F)\leq N\omega_N$. The set $E\cup F$ has then volume $V$, and by~(\ref{thm1.2}) we obtain $P(E\cup F)\leq \J(V)$, which means that $E\cup F$ is an isoperimetric set.
\end{proof}

\subsection{Proof of Proposition~\ref{farball}}

This section is devoted to the proof of Proposition~\ref{farball}. Before presenting it, it is convenient to show a couple of technical lemmas.

\begin{lemma}\label{lm:sgnint}
Let $g: (0,\infty)\to[0,\infty)$ and $\alpha:(-1,1)\to\R$ be $L^1$ functions such that
\begin{align}\label{alphacond}
\lim_{t\to \infty} g(t)=0\,, &&
\int_{-1}^1\alpha(t)\,dt = 0\,, &&
\int_{-1}^\sigma \alpha(t) \,dt >0 \quad \forall \,\sigma \in (-1,1)\,.
\end{align}
Then there exists an arbitrarily large $R$ such that
\[
\int_{-1}^1\alpha(t)g(t+R)\,dt\geq 0\,,
\]
with strict inequality unless $g(t)=0$ for all $t$ big enough.
\end{lemma}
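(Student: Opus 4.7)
The plan is to introduce the primitive $A(\sigma):=\int_{-1}^{\sigma}\alpha(t)\,dt$, which by the assumptions in~(\ref{alphacond}) is continuous on $[-1,1]$, satisfies $A(-1)=A(1)=0$, and is strictly positive on the open interval $(-1,1)$. Setting $I(R):=\int_{-1}^{1}\alpha(t)\,g(t+R)\,dt$, the entire argument will consist in controlling $I(R)$ through its running integral in $R$ rather than pointwise.

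The central computation is the identity
\[
\int_{R_0}^{R_1} I(R)\,dR \;=\; \int_{-1}^{1} A(t)\bigl[g(t+R_0)-g(t+R_1)\bigr]\,dt\,,
\]
which I derive in two moves: Fubini in $(t,R)$ turns the inner integral into $\int_{t+R_0}^{t+R_1} g(s)\,ds$, and then one integration by parts in $t$ kills the boundary terms thanks to $A(\pm 1)=0$. Since $g\in L^{1}(0,\infty)$ and $A$ is bounded, the second term on the right is dominated by $\|A\|_{\infty}\int_{R_1-1}^{R_1+1}g$ and hence vanishes as $R_1\to\infty$; a symmetric Fubini bound also gives $I\in L^{1}(R_0,\infty)$. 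Passing to the limit produces the key formula
\[
\int_{R_0}^{\infty} I(R)\,dR \;=\; \int_{-1}^{1} A(t)\,g(t+R_0)\,dt \;\geq\; 0\,,
\]
valid for \emph{every} $R_0>0$, the sign being automatic since $A\geq 0$ and $g\geq 0$.

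From here the first half of the claim is immediate: given any threshold $R^{*}$, applying the formula with $R_0=R^{*}$ shows that $I\geq 0$ on a positive-measure subset of $(R^{*},\infty)$, so one finds $R>R^{*}$ with $I(R)\geq 0$, and letting $R^{*}\to\infty$ produces such $R$'s arbitrarily large. For the strict inequality I assume that $g$ is not eventually zero, so that $\bigl|\{g>0\}\cap(T,\infty)\bigr|>0$ for every $T$. Fixing any threshold, I pick a Lebesgue density point $R_0$ of $\{g>0\}$ beyond that threshold: then $\{g>0\}\cap(R_0-1,R_0+1)$ has positive measure, and since $A>0$ on $(-1,1)$ the identity above now gives $\int_{R_0}^{\infty}I(R)\,dR>0$, which forces $I>0$ on a positive-measure subset of $(R_0,\infty)$ and delivers the required $R$.

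The substantive obstacle is really just the opening observation: the naive attempt of testing $g(\cdot+R)$ directly against $\alpha$ and exploiting monotonicity of $g$ collapses because $\alpha$ changes sign and $g$ is assumed neither monotone nor continuous. Paying the price of one integration by parts replaces $\alpha$ by its nonnegative primitive $A$, after which the positivity of the averaged quantity is essentially free. The only remaining care will be the $L^{1}$-style justification of the Fubini swap and of the limit $R_1\to\infty$ under the weak hypothesis $g\in L^{1}(0,\infty)$, and the minor measure-theoretic argument that turns the integral inequality on $R$ into a pointwise statement at an arbitrarily large $R$.
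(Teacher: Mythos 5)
Your proof is correct and follows essentially the same route as the paper: both integrate $I(R):=\int_{-1}^1\alpha(t)g(t+R)\,dt$ over an interval of $R$'s, reduce this via Fubini to $\Phi(R_0)-\Phi(R_1)$ with $\Phi(R):=\int_{-1}^1 A(t)\,g(t+R)\,dt\geq 0$, and exploit $\Phi(R_1)\to 0$ as $R_1\to\infty$. The only cosmetic difference is that you obtain the key identity by an integration by parts against the primitive $A$ (using $A(\pm1)=0$), while the paper reaches the same expression by swapping the order of integration and splitting the $s$-domain, cancelling the middle region via $\int_{-1}^1\alpha=0$; and you argue directly rather than by contradiction.
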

\begin{proof}
If the claim were false, then for every choice of $R',\,R''$ with $R''\geq R'+2$ one had
\[\begin{split}
0&>\int_{R'}^{R''}\int_{-1}^1\alpha(t)g(t+R)\,dt\,dR
=\int_{R'-1}^{R'+1}g(s)\int_{-1}^{s-R'}\alpha(t)\,dt\,ds
+\int_{R''-1}^{R''+1}g(s)\int_{s-R''}^{1}\alpha(t)\,dt\,ds\\
&=A(R')+B(R'')\,,
\end{split}\]
where there is no integral over $(R'+1,R''-1)$ because it cancels thanks to~(\ref{alphacond}). The conditions on $\alpha$ and $g$ also ensure that $A(R')\geq 0\geq B(R'')$ for every $R',\, R''$. Suppose now that for some arbitrarily large $R'$ one has $A(R')>0$; we can then fix $R'$ and send $R''\to \infty$: since $g\to 0$, we get $B(R'')\to 0$, and then there is some $R''\gg 1$ such that $A(R')+B(R'')>0$, against the above inequality. As a consequence, it must be $A(R')=0$ for every $R'$ big enough, and in turn this means that $g$ is definitively zero, hence any $R$ big enough satisfies the claim.
\end{proof}

\begin{lemma}\label{lm:sgnintbis}
Let $g: (0,\infty)\to [0,\infty)$ and $\beta:(-1,1)\to\R$ be $L^1$ functions such that $g$ and $\alpha(t)=\int_{-1}^t\beta(\sigma)\,d\sigma$ satisfy condition~\eqref{alphacond}, and $\alpha(1)=0$. Then, there exists an arbitrarily large $R$ such that
\begin{equation}\label{dimbeta}
\int_{-1}^1\beta(t)g(t+R)\,dt\geq 0\,,
\end{equation}
with strict inequality unless $g(t)=0$ for all $t$ big enough.
\end{lemma}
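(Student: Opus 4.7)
The plan is to reduce the claim to Lemma~\ref{lm:sgnint} via an integration by parts on $[-1,1]$. The key structural fact enabling this is that $\alpha$ is absolutely continuous with $\alpha(-1)=0$ automatically and $\alpha(1)=0$ by hypothesis, so any integration by parts that converts a derivative of $\alpha$ into $\alpha$ itself will have vanishing boundary terms.

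Rather than trying to produce a single $R$ directly, I would average the quantity in~\eqref{dimbeta} over $R\in[R',R'']$ and use Fubini. Writing $h(t)=\int_{R'+t}^{R''+t}g(s)\,ds$, this gives
\[
\int_{R'}^{R''}\!\!\int_{-1}^1\beta(t)g(t+R)\,dt\,dR=\int_{-1}^1\alpha'(t)h(t)\,dt.
\]
Since $\alpha$ and $h$ are absolutely continuous on $[-1,1]$ and $\alpha(\pm 1)=0$, integration by parts rewrites the right-hand side as
\[
\int_{-1}^1\alpha(t)g(R'+t)\,dt-\int_{-1}^1\alpha(t)g(R''+t)\,dt.
\]
For the first summand, Lemma~\ref{lm:sgnint} applied to the pair $(\alpha,g)$ --whose hypotheses hold by assumption-- provides arbitrarily large $R'$ for which it is strictly positive, say equal to some $\delta>0$; the degenerate case $g\equiv 0$ eventually is trivial, as then~\eqref{dimbeta} reduces to $0\geq 0$. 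For the second summand, I would use $|\alpha(t)|\leq\|\beta\|_{L^1}$ on $[-1,1]$ together with $g\in L^1(0,\infty)$ to bound its absolute value by $\|\beta\|_{L^1}\int_{R''-1}^{R''+1}g(s)\,ds$, which vanishes as $R''\to\infty$ by absolute continuity of the integral. Choosing $R''\gg R'$ so that this bound is less than $\delta/2$ makes the averaged integral strictly positive, and the mean value theorem then produces some $R\in(R',R'')$ satisfying~\eqref{dimbeta} with strict inequality. Such an $R$ is arbitrarily large because $R'$ already is.

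The only technical step that requires care is justifying the integration by parts without assuming $g$ to be differentiable, but this is routine: $h$ is absolutely continuous with $h'(t)=g(R''+t)-g(R'+t)$ almost everywhere, so the classical AC integration-by-parts formula applies verbatim. The hypothesis $\alpha(1)=0$ is indispensable rather than cosmetic: without it, a leftover boundary term $\alpha(1)\int_{R''-1}^{R''+1}g$ would appear, and it need not vanish in the limit in a useful way, breaking the cancellation between the two endpoints that the whole argument relies upon.
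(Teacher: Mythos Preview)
Your proposal is correct and follows essentially the same route as the paper. Both proofs average~\eqref{dimbeta} over $R\in[R',R'']$ and reduce, via Fubini (which you phrase as integration by parts on $\alpha' h$), to the identity
\[
\int_{R'}^{R''}\!\!\int_{-1}^1\beta(t)g(t+R)\,dt\,dR=\int_{-1}^1\alpha(t)g(t+R')\,dt-\int_{-1}^1\alpha(t)g(t+R'')\,dt,
\]
then invoke Lemma~\ref{lm:sgnint} for the first term and let $R''\to\infty$ to kill the second; the only cosmetic difference is that the paper wraps this in a contradiction argument while you argue directly via the mean value of the integrand.
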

\begin{proof}
The proof is analogous to the one of Lemma~\ref{lm:sgnint} above. Take $R'\gg 1$ and assume that the conclusion fails for every $R\geq R'$: then, for every $R''> R' +2$ we have
\[
0>\int_{R'}^{R''}\int_{-1}^{1} \beta(t)g(t+R)\,dt\,dR
=\int_{R'-1}^{R'+1}g(s)\int_{-1}^{s-R'}\beta(t)\,dt\,ds+\int_{R''-1}^{R''+1}g(s)\int_{s-R''}^{1}\beta(t)\,dt\,ds\,.
\]
Exactly as before, since the last term in the right goes to $0$ when $R''\to\infty$, we find a contradiction as soon as the first term in the right is strictly positive. In other words, the proof is concluded as soon as we find some $R'$ such that
\[
0<\int_{R'-1}^{R'+1}g(s)\int_{-1}^{s-R'}\beta(t)\,dt\,ds
=\int_{R'-1}^{R'+1} g(s) \alpha(s-R')\,ds
=\int_{-1}^1 \alpha(t)g(t+R')\,dt\,.
\]
And in turn, the existence of such an $R'$ is ensured by Lemma~\ref{lm:sgnint} since $\alpha$ satisfies condition~(\ref{alphacond}), unless $g$ is definitively zero. And in this latter case, of course any $R$ big enough would satisfy the required condition.
\end{proof}

We are now in position to prove Proposition~\ref{farball}.

\proofof{Proposition~\ref{farball}}
For simplicity, we split the proof in two steps: first we show that one can always reduce himself to the case of a radial density, and then we prove the claim for this case.
\step{I}{Reduction to radial case.}
Let us assume that the claim holds for any radial density, and let $f$ be not necessarily radial. Define then the density $\tilde f$ as the radial average of $f$, namely,
\begin{equation}\label{deftildef}
\tilde f(x) = \intmed_{\partial B_{|x|}} f(y)\, d\H^{N-1}(y)\,.
\end{equation}
Of course, then $\tilde g=1-\tilde f$ is also the radial average of $g$. Since the claim holds for the radial density $\tilde f$, for any $\eps>0$ we can find a ball $B$ satisfying $P_{\tilde g}(B) \geq (N-\eps) |B|_{\tilde g}$. Let us then call $B^\theta$, for $\theta\in\S^{N-1}$, the ball having the same distance from the origin as $B$, and which is rotated of an angle $\theta$: all the different balls $B^\theta$ are equivalent for the density $\tilde f$, but not for the original density $f$. Observe now that by definition
\begin{align*}
P_{\tilde g} (B) = \intmed_{\S^{N-1}} P_g (B^\theta) \, d\H^{N-1}(\theta)\,, &&
|B|_{\tilde g}= \intmed_{\S^{N-1}} |B^\theta|_g \, d\H^{N-1}(\theta)\,,
\end{align*}
and then of course there exists some $\theta\in\S^{N-1}$ such that $P_g(B^\theta)\geq (N-\eps) |B^\theta|_g$.

\step{II}{Proof of the radial case.}
Thanks to Step~I we can assume without loss of generality that $f$ is radial. For a ball $B_R$ having radius $1$ and center at a distance $R$ from the origin, we can then calculate perimeter and volume by integrating over the radial layers, that is, we have
\begin{align}\label{exactform}
P_g(B_R) = \int_{-1}^1 \varphi_R(t) g(t+R) \, dt\,, &&
|B_R|_g = \int_{-1}^1 \psi_R(t) g(t+R) \, dt\,,
\end{align}
where $\varphi_R(t)$ and $\psi_R(t)$ can be calculated by Fubini Theorem and co-area formula. Actually, it is not important to write down the exact formula, while it is immediate to observe that (basically, since the layers become flat in the limit) the following uniform limits hold
\begin{align}\label{limitstilde}
\frac{\varphi_R(t)}{\widetilde\varphi(t)} \freccia{R\to\infty} 1\,, &&
\frac{\psi_R(t)}{\widetilde\psi(t)} \freccia{R\to\infty} 1\,,
\end{align}
being the limit functions simply
\begin{align*}
\widetilde\varphi(t) = (N-1)\omega_{N-1} (1-t^2)^{\frac{N-3}2}\,, &&
\widetilde\psi(t) = \omega_{N-1} (1-t^2)^{\frac{N-1}2}\,.
\end{align*}
As a consequence, we can work with the approximated functions $\widetilde\varphi$ and $\widetilde\psi$ in place of $\varphi$ and $\psi$: more precisely, we call ``approximated'' perimeter and volume of $B_R$ the functions $\widetilde P_g(B_R)$ and $\widetilde V_g(B)$ obtained by substituting $\varphi$ and $\psi$ in~(\ref{exactform}) with $\widetilde\varphi$ and $\widetilde\psi$. The claim will be then automatically obtained, thanks to~(\ref{limitstilde}), if we can find an arbitrarily large $R$ such that
\[
\widetilde P_g(B_R)\geq N \widetilde{V}_g(B_R)\,.
\]
We can now define $\beta:(-1,1)\to \R$ as $\beta(t)=\tilde\varphi(t)-N\tilde\psi(t)$, so that we are reduced to find an arbitrarily large $R$ such that~(\ref{dimbeta}) holds. It is elementary to check that the assumptions of Lemma~\ref{lm:sgnintbis} are satisfied: one can either do the simple calculations, or just observe that $\alpha(t)$ coincides with the perimeter minus $N$ times the volume of the portion of the unit ball centered at the origin whose first coordinate is between $-1$ and $t$, so that all the conditions to check become trivial. Therefore, the existence of the searched $R$ directly comes from Lemma~\ref{lm:sgnintbis}, and the proof is completed.
\end{proof}

\subsection{Proof of Proposition~\ref{prop:setmdgen}}
This last section is entirely devoted to give the proof of Proposition~\ref{prop:setmdgen}, which is again divided in some steps. For convenience of the reader, in Steps~I and~II we start with two particular cases, namely, when $f$ is non-decreasing along the half-lines starting at the origin, and when $f$ is radial: even though these two particular cases are not really needed for the proof, the argument is similar to the general one but works more easily, so this helps to understand the general case.

\proofof{Proposition~\ref{prop:setmdgen}}
Let us fix $\eps\ll 1$: thanks to Proposition~\ref{farball}, there is a ball $B=B_R^{\bar\theta}$ of radius $1$ and centered at the point $R\bar\theta$, with some arbitrarily large $R$ and some $\bar\theta\in \S^{N-1}$, which satisfies $P_g(B)\geq (N-\eps) |B|_g$. Since $f\leq 1$ on $B$, we have $|B|_f\leq \omega_N$: if $|B|_f=\omega_N$ we are already done, because $P_f(B)\leq P_{\rm eucl}(B)=N\omega_N$, and this automatically implies that the mean density of $B$ is less than $1$. Let us then suppose that $|B|_f <\omega_N$, or equivalently that $|B|_g>0$, and let us try to enlarge $B$ so to reach volume $\omega_N$, but still having mean density less than $1$. We will do this in some steps.

\step{I}{The case of non-decreasing densities.}
Let us start with the case when $f$ is a ``non-decreasing density'': this means that, for every $\theta\in \S^{N-1}$, the function $t\mapsto f(t\theta)$ is non-decreasing, at least for large $t$.\par

In this case, let us define a new set $E$ as follows. First of all, we decompose $B=B_l \cup B_r$, where $B_l$ and $B_r$ are the ``left'' and the ``right'' part of the ball $B_R^{\bar\theta}$: formally, a point $x\in B$ is said to belong to $B_l$ or $B_r$ if $x\cdot \bar\theta$ is smaller or bigger than $R$ respectively. Then, for any small $\delta$, we call $B_{l,\delta}$ the half ball centered at $(R-\delta)\bar\theta$ with radius $(R-\delta)/R$, and $C_\delta$ the cylinder of radius $1$ and height $\delta$ whose axis is the segment connecting $(R-\delta)\bar\theta$ and $R\bar\theta$; finally, we let $E_\delta=B_r \cup B_{l,\delta} \cup C_\delta$, see Figure~\ref{Fig:sets}, left. Since $f$ is converging to $1$, and $R$ can be taken arbitrarily big, we have
\[
|E_\delta|_f  - |B|_f \geq (1-\eps ) \omega_{N-1} \delta \,;
\]
as a consequence, by continuity we can fix $\bar\delta$ such that $E=E_{\bar \delta}$ has exactly volume $\omega_N$, and we have
\begin{equation}\label{estibardelta}
\bar\delta \leq (1+2 \eps) \,\frac{|B|_g}{\omega_{N-1}}\,.
\end{equation}
Thanks to the assumption that $f$ is non-decreasing, we know that
\begin{equation}\label{diminbound}
\H^{N-1}_f (\partial^l B_{l,\delta}) \leq \H^{N-1}_f (\partial^l B_l)\,,
\end{equation}
where we call $\partial^l B_{l,\delta}$ and $\partial^l B_\delta$ the ``left parts'' of the boundaries, that is,
\begin{align*}
\partial^l B_l = \Big\{y \in \partial B_l :\, y \cdot \bar\theta \leq R \Big\}\,, &&
\partial^l B_{l,\delta} = \Big\{y \in \partial B_{l,\delta} :\, y \cdot \bar\theta \leq R-\delta \Big\}\,.
\end{align*}
As a consequence, using again that $f\leq 1$ and that $R$ can be taken arbitrarily big, thanks to~(\ref{estibardelta}) and~(\ref{diminbound}) we can evaluate
\[\begin{split}
P_f(E) &\leq P_f(B) +(N-1+\eps)\omega_{N-1}\bar\delta 
\leq N\omega_N - P_g(B)  +(N-1+\eps) (1+2\eps) |B|_g\\
&\leq N\omega_N - (N-\eps) |B|_g  +(N-1+\eps) (1+2\eps) |B|_g < N\omega_N\,.
\end{split}\]
Summarizing, we have built a set $E$ arbitrarily far from the origin, with volume exactly $\omega_N$, and perimeter less than $N\omega_N$, thus mean density less than $1$. The proof is then concluded for this case.

\begin{figure}[thbp]
\input{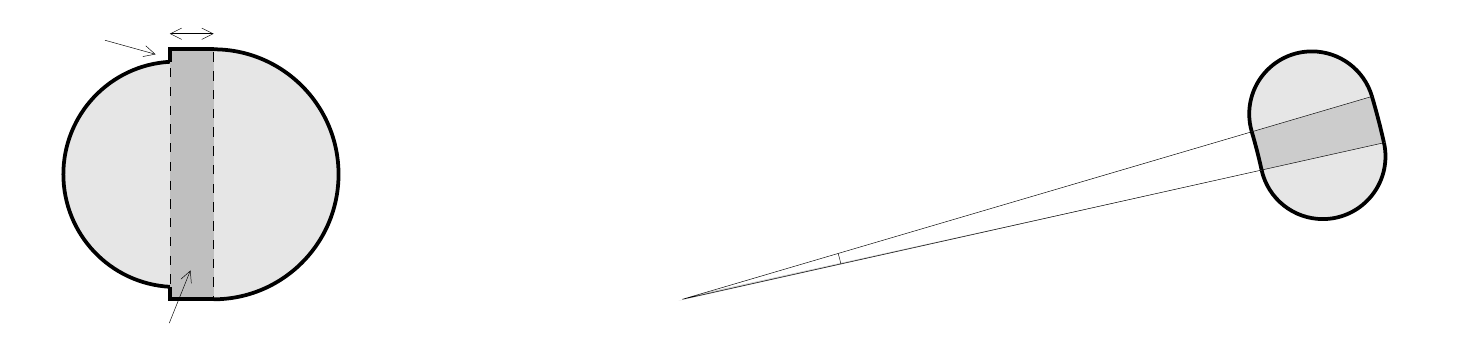_t}
\caption{The sets $E$ of Step~I (left) and of Step~II (right). The half-balls $B_r$ and $B_{l,\delta}$, as well as the half-balls $B^-$ and $B^+_\delta$, are light shaded; the cylinder $C_\delta$, as well as the region $E\setminus (B^-\cup B^+_\delta)$, is dark shaded.}\label{Fig:sets}
\end{figure}

\step{II}{The case of radial densities.}
Let us now assume that the density is radial. In this case, we cannot use the same argument as in the previous step, because there would be no way to extend the validity of~(\ref{diminbound}). Nevertheless, we can use a similar idea to enlarge the ball $B$, namely, instead of translating half of the ball $B$ we rotate it. More formally, let us take an hyperplane passing through the origin and the center of the ball $B_R^{\bar\theta}$, and let us call $B^\pm$ the two corresponding half-balls in which $B_R^{\bar\theta}$ is subdivided. Let us then consider the circle contained in $\S^{N-1}$ which contains the direction $\bar\theta$ and the direction orthogonal to the hyperplane, and for any small $\sigma>0$ call $\rho_\sigma$ the rotation of an angle $\sigma$ with respect to this circle. Then, let us call $B^+_\sigma=\rho_\sigma(B^+)$ and finally let $E_\delta$ be the union of $B^-$ with all the half-balls $B^+_\sigma$ for $0<\sigma<\delta$, as in Figure~\ref{Fig:sets}, right. As in the previous step, since $f$ is converging to $1$ we can evaluate the difference of the volumes as
\[
|E_\delta|_f  - |B|_f \geq \omega_{N-1}(R-1)(1-\eps) \delta \,,
\]
then we can again select $\bar\delta$ such that $E=E_{\bar\delta}$ has volume exactly $\omega_N$ and we have
\begin{equation}\label{giamer}
\bar\delta \leq  (1+2 \eps)\,\frac{|B|_g}{\omega_{N-1}(R-1)}\,.
\end{equation}
This time, the radial assumption on $f$ gives
\[
\H^{N-1}_f(\partial^+ B^+_\delta)=\H^{N-1}_f (\partial^+ B^+)\,,
\]
where we call $\partial^+ B^+_\delta$ and $\partial^+ B^+$ the ``upper'' parts of the boundaries in the obvious sense. And finally, almost exactly as in last step we can evaluate the perimeter of $E$ as
\[\begin{split}
P_f(E) &\leq P_f(B) + (N-1)\omega_{N-1} (R+1) \bar\delta
\leq N\omega_N - P_g(B) + (N-1)(1+2\eps)\,\frac{R+1}{R-1}\,|B|_g\\
&\leq N\omega_N - (N-\eps)|B|_g + (N-1)(1+2\eps)\,\frac{R+1}{R-1}\,|B|_g< N\omega_N\,,
\end{split}\]
where the last inequality again is true if we have chosen $\eps\ll 1$ and then $R\gg 1$. Thus, the set $E$ has volume $\omega_N$ and mean density less than $1$, and the proof is obtained also in this case.

\step{III}{The general case in dimension $2$.}
Let us now treat the case of a general density $f$. For simplicity of notations we assume now to be in the two-dimensional situation $N=2$, and in the next step we will generalize our argument to any dimension.\par
As in the proof of Proposition~\ref{farball}, let us call $\tilde f$ the radial average of $f$ according to~(\ref{deftildef}), and $\tilde g=1-\tilde f$ the radial average of $g$. Proposition~\ref{farball} provides then us with a ball $B_R$, of radius $1$ and distance $R\gg 1$ from the origin, such that
\begin{equation}\label{choiceR}
P_{\tilde g}(B_R)\geq (N-\eps) |B_R|_{\tilde g}\,.
\end{equation}
For any $\theta \in \S^1$, as usual, we call then $B_R^\theta$ the ball of radius $1$ centered at $R\theta$. Let us now argue as in Step~II: we call $B_R^{\theta,\pm}$ (resp., $\partial^\pm B_R^\theta$) the two half-balls (resp., half-circles) made by the points of $B_R^\theta$ (resp., $\partial B^\theta_R$) having direction bigger or smaller than $\theta$; thus, for any small $\delta>0$, we define $E_\delta^\theta$ the union of $B^{\theta,-}_R$ with all the half-balls $B^{\theta+\sigma,+}_R$ for $0<\sigma<\delta$. Since the sets $E^\theta_\delta$ are increasing for $\delta$ increasing, if $R\gg 1$ there is a unique $\bar\delta=\bar\delta(\theta)$ such that $|E^\theta_{\bar\delta}|_f=\omega_N$, and exactly as in Step~II we have the 
estimate~(\ref{giamer}) for $\bar\delta$, which for $R$ big enough (since $f\to 1$ and then $g\to 0$) implies
\begin{equation}\label{perdopo}
\bar\delta(\theta) \leq \frac{(1+3\eps)|B_R^\theta|_g}{\omega_{N-1}(R-1)}\,.
\end{equation}
Let us then define the function $\tau:\S^1\to\S^1$ as $\tau(\theta)=\theta+\bar\delta(\theta)$, and notice that by construction this is a strictly increasing bijection of $\S^1$ onto itself, with $\tau(\theta)>\theta$ (if $\tau(\theta)=\theta$ then the ball $B^\theta_R$ has already volume $\omega_N$, and in this case there is nothing to prove, as already observed). Let us now fix a generic $\theta\in\S^1$, and let $\eta\ll \tau(\theta)-\theta$: if we call
\begin{align*}
A=\Big(\bigcup\nolimits_{0<\sigma<\eta} B_R^{\theta+\sigma} \Big)\setminus B_R^{\theta+\eta}\,, &&
B=\Big(\bigcup\nolimits_{0<\sigma<\eta} B_R^{\tau(\theta+\sigma)} \Big)\setminus B_R^{\tau(\theta)}\,,
\end{align*}
then, since
\begin{align*}
\big|E^\theta_{\bar\delta(\theta)}\big|_f = \omega_N = \big|E^{\theta+\eta}_{\bar\delta(\theta+\eta)}\big|_f\,, &&
E^{\theta+\eta}_{\bar\delta(\theta+\eta)}=\big(E^\theta_{\bar\delta(\theta)}\cup B\big) \setminus A\,,
\end{align*}
one has $|A|_g =|B|_g$. On the other hand, one clearly has
\[
\frac{|B|_{\rm eucl}}{|A|_{\rm eucl}}= \frac{\tau(\theta+\eta)-\tau(\theta)}\eta\,,
\]
Up to take $R$ big enough, we can assume without loss of generality that $1-\eps\leq f \leq 1$ for points having distance at least $R-1$ from the origin, and this yields
\[
1-\eps \leq \frac{\tau(\theta+\eta) - \tau(\eta)}\eta \leq \frac 1{1-\eps}\,.
\]
As an immediate consequence, we get that the function $\tau$ is bi-Lipschitz and $1-\eps\leq\tau'\leq (1-\eps)^{-1}$. Let us now observe that, by construction, all the sets $E^\theta=E^\theta_{\tau(\theta)-\theta}$ have exactly volume $\omega_N$: we want then to find some $\bar\theta\in \S^1$ such that $P_f(E^{\bar\theta})\leq N\omega_N$, so $E^{\bar\theta}$ has mean density less than $1$ and we are done. Now, since a simple change of variables gives
\[
\intmed_{\S^1} \H^{N-1}_g \big(\partial^+ B^\theta_R\big)\,d\theta = \intmed_{\S^1} \H^{N-1}_g \big(\partial^+ B^{\tau(\nu)}_R\big) \tau'(\nu) \,d\nu
\leq \frac 1{1-\eps}\ \intmed_{\S^1} \H^{N-1}_g \big(\partial^+ B^{\tau(\theta)}_R\big) \,d\theta\,,
\]
we can readily evaluate by~(\ref{choiceR})
\[\begin{split}
0&\leq P_{\tilde g}(B_R) -(N-\eps) |B_R|_{\tilde g} = \intmed_{\S^1} P_g(B_R^\theta) - (N-\eps)|B_R^\theta|_g\,d\theta \\
&= \intmed_{\S^1} \H^{N-1}_g \big(\partial^+ B^\theta_R\big)\,d\theta+\intmed_{\S^1} \H^{N-1}_g \big(\partial^- B^\theta_R\big)\,d\theta-(N-\eps)\intmed_{\S^1} |B_R^\theta|_g\,d\theta
\\
&\leq \intmed_{\S^1} \frac 1{1-\eps}\H^{N-1}_g\big(\partial^+ B^{\tau(\theta)}_R\cup\partial^-B^\theta_R\big)-(N-\eps)|B_R^\theta|_g\,d\theta\,,
\end{split}\]
and hence get the existence of some $\bar\theta\in \S^1$ such that
\[
\H^{N-1}_g\big(\partial^+ B^{\tau(\bar\theta)}_R\cup\partial^-B^{\bar\theta}_R\big)\geq (1-\eps) (N-\eps) |B_R^{\bar\theta}|_g\,.
\]
Thanks to~(\ref{perdopo}), we have then
\[\begin{split}
P_f\big(E^{\bar\theta}\big)&= \H^{N-1}_f\big(\partial^+ B^{\tau(\bar\theta)}_R\cup\partial^-B^{\bar\theta}_R\big)
+\H^{N-1}_f\Big(\partial E^{\bar\theta} \setminus \big(\partial^+ B^{\tau(\bar\theta)}_R\cup \partial^- B^{\bar\theta}_R\big)\Big)\\
&\leq N\omega_N - \H^{N-1}_g\big(\partial^+ B^{\tau(\bar\theta)}_R\cup\partial^-B^{\bar\theta}_R\big)
+ (N-1)\omega_{N-1} \bar\delta(\bar\theta) (R+1)\\
&\leq N\omega_N - (1-\eps)(N-\eps) |B_R^{\bar\theta}|_g+ (N-1)(1+3\eps)|B_R^{\bar\theta}|_g < N\omega_N\,,
\end{split}\]
where the last inequality holds as soon as $\eps$ was chosen small enough at the beginning. The set $E^{\bar\theta}$ is then as searched and this step is done.

\step{IV}{The general case.}
We are now ready to conclude the proof in the general case. We start noticing that in the argument of Step~III the assumption $N=2$ was used only to work with $\S^1$, hence to get the validity of~(\ref{choiceR}). More precisely, let us assume that there exists some arbitrarily large $R$ and some circle $\C\approx \S^1$ in $\S^{N-1}$ such that the estimate
\begin{equation}\label{general}
\intmed_\C P_g (B_R^\theta) \, d\H^1(\theta) \geq (N-\eps) \intmed_\C |B_R^\theta|_g\,d\H^1(\theta)
\end{equation}
holds true. Then, we can repeat \emph{verbatim} the proof of Step~III, we get the existence of some $\bar\theta\in \C$ such that the set $E_R^{\bar\theta}$ has volume $\omega_N$ and mean density less than $1$, and the proof is concluded. Hence, we are left to find some $R$ and some circle $\C$ so that~(\ref{general}) holds; notice that, if $N=2$, then it must be $\C=\S^1$ and~(\ref{general}) reduces to~(\ref{choiceR}), which in turn holds for some arbitrarily large $R$ thanks to Proposition~\ref{farball}.\par

Let us then consider the case of dimension $N=3$. By Proposition~\ref{farball} we can take $R\gg 1$ such that~(\ref{choiceR}) holds true; for any $\theta\in \S^2$, then, we can call $\C_\theta$ the circle in $\S^2$ which is orthogonal to $\theta$, and observe that by homogeneity
\begin{align*}
P_{\tilde g}(B_R) = \intmed_{\S^2} \intmed_{\C_\theta} P_g (B_R^\sigma)\, d\H^1(\sigma) \, d\H^2(\theta)\,, &&
|B_R|_{\tilde g} = \intmed_{\S^2} \intmed_{\C_\theta} |B_R^\sigma|_g\, d\H^1(\sigma) \, d\H^2(\theta)\,,
\end{align*}
so thanks to~(\ref{choiceR}) we get the existence of a circle $\C=\C_{\bar\theta}$ for which~(\ref{general}) holds true: the proof is then concluded also in dimension $N=3$.\par

Notice that the argument above can be rephrased as follows: if there exists some sphere $\mathcal S\approx \S^2 \subseteq \S^{N-1}$ such that the average estimate~(\ref{general}) holds with $\mathcal S$ in place of $\C$ (and in turn in dimension $N=3$ this reduces to~(\ref{choiceR}) and hence holds), then the proof is concluded. As a consequence, the claim follows also in dimension $N=4$, arguing exactly as above with the spheres $\mathcal S_\theta\approx \S^2$ orthogonal to any $\theta\in \S^3$, and the obvious induction argument gives then the thesis for any dimension.
\end{proof}

\begin{remark}{\rm
Notice that, in the proof of Proposition~\ref{prop:setmdgen}, we have actually found a set which has mean density \emph{strictly less} than $1$, unless $g\equiv 0$ on some ball of radius $1$. On the other hand, as clearly appears from the proof of Theorem~\ref{main}, it is impossible to find such a set if some isoperimetric sequence is losing mass at infinity: indeed, otherwise the argument of Theorem~\ref{main} would give a set with perimeter strictly less than the infimum. There are then only two possibilities: either there are balls where $f\equiv 1$ arbitrarily far from the origin, or no isoperimetric sequence can lose mass at infinity.\par
In particular, our proof shows that no isoperimetric sequence can lose mass at infinity if $f<1$ out of some big ball.}
\end{remark}

\section*{Acknowledgment}
The work of the three authors was supported through the ERC St.G. 258685. We wish also to thank Michele Marini and Frank Morgan for useful discussions and comments.


\begin{thebibliography}{9}
\bibitem{Alm} F.J. Almgren, Jr., Existence and regularity almost everywhere of solutions to elliptic variational problems with constraints, Mem. Amer. Math. Soc. \textbf{4} (1976), no. 165.
\bibitem {BigReg} F.J. Almgren, Jr., Almgren's big regularity paper. Q-valued functions minimizing Dirichlet's integral and the regularity of area-minimizing rectifiable currents up to codimension 2. World Scientific Monograph Series in Mathematics, Vol. \textbf{1}: xvi+955 pp. (2000).
\bibitem{CRS} X. Cabr\'e, X. Ros-Oton, J. Serra, Euclidean balls solve some isoperimetric problems with nonradial weights, C. R. Math. Acad. Sci. Paris {\bf 350} (2012), no. 21-22, 945--947.
\bibitem{CMV} A. Ca\~nete, M. Miranda Jr., D. Vittone, Some isoperimetric problems in planes with density, J. Geom. Anal. {\bf 20} (2010), no. 2, 243--290.
\bibitem{CP12} E. Cinti, A. Pratelli, The $\eps-\eps^{\beta}$ property, the boundedness of isoperimetric sets in $\R^{n}$ with density, and some applications, to appear in J. Reine Angew. Math. (Crelle), 2014\,.
\bibitem{D} A. D\'\i az, N. Harman, S. Howe, D. Thompson, Isoperimetric problems in sectors with density,  Adv. Geom. {\bf 12} (2012), no. 4, 589--619.
\bibitem{M1} F. Morgan, Geometric Measure Theory: a Beginner's Guide, Academic Press, 4th edition (2009).
\bibitem{newM1} F. Morgan, Manifolds with density, Notices Amer. Math. Soc. {\bf 52} (2005), 853--858.
\bibitem{MJ} F. Morgan, D.L. Johnson, Some sharp isoperimetric theorems for Riemannian manifolds, Indiana Univ. Math. J., {\bf 49} (2000), no. 3, 1017--1041.
\bibitem{PM13} F. Morgan, A. Pratelli, Existence of isoperimetric regions in $\R^n$ with density, Ann. Global Anal. Geom. {\bf 43 }(2013), no. 4, 331--365.
\bibitem{BCMR} C. Rosales, A. Ca\~nete, V. Bayle, F. Morgan, On the isoperimetric problem in Euclidean space with density, Calc. Var. Partial Differential Equations {\bf 31} (2008), no. 1, 27--46.
\end{thebibliography}
\end{document}